\newtheorem{thm}{Theorem}%
\newtheorem{lem}{Lemma}%
\theoremstyle{definition}
\theoremstyle{remark}
\theoremstyle{plain}
\def\QQ{{\mathbb Q}}
\def\PP{{\mathbb P}}
\def\RR{{\mathbb R}}
\def\ZZ{{\mathbb Z}}
\def\veca{{\text{\boldmath$a$}}}
\def\scrF{{\mathcal F}}
\def\scrS{{\mathcal S}}
\def\scrT{{\mathcal T}}
\def\fT{{\mathfrak T}}
\def\e{\mathrm{e}}
\def\ASL{\operatorname{ASL}}
\def\meas{\operatorname{meas}}
\title{Gaps between logs}
\author{Jens Marklof}
\author{Andreas Str\"ombergsson}
\address{School of Mathematics, University of Bristol,
Bristol BS8 1TW, U.K.\newline
\rule[0ex]{0ex}{0ex} \hspace{8pt}{\tt j.marklof@bristol.ac.uk}}
\address{Department of Mathematics, Box 480, Uppsala University,
SE-75106 Uppsala, Sweden\newline
\rule[0ex]{0ex}{0ex} \hspace{8pt}{\tt astrombe@math.uu.se}}
\date{6 January 2013}
\thanks{J.M.\ is supported by a Royal Society Wolfson Research Merit Award and ERC Advanced Grant HFAKT. A.S.\ is a Royal Swedish Academy of Sciences Research Fellow supported by
a grant from the Knut and Alice Wallenberg Foundation.}
\subjclass[2010]{11K06, 11J71}
\begin{document}

\begin{abstract}
We calculate the limiting gap distribution for the fractional parts of $\log n$, where $n$ runs through all positive integers. By rescaling the sequence, the proof quickly reduces to an argument used by Barra and Gaspard in the context of level spacing statistics for quantum graphs. The key ingredient is Weyl equidistribution of irrational translations on multi-dimensional tori. Our results extend to logarithms with arbitrary base; we deduce explicit formulas when the base is transcendental or the $r$th root of an integer. If the base is close to one, the gap distribution is close to the exponential distribution.
\end{abstract}

\maketitle

\newcounter{sectionno}\setcounter{sectionno}{0}

\stepcounter{sectionno}\S \arabic{sectionno}.
The gap distribution is a popular measure to quantify the degree of randomness in a given deterministic sequence. Rudnick and Zaharescu have shown \cite{Rudnick02} that the gap distribution of the fractional parts of $2^n \alpha$ ($n=1,\ldots,N$), for almost all $\alpha$, converges in the limit $N\to\infty$ to an exponential distribution---the gap distribution of a Poisson point process. ($2^n$ may be replaced by any lacunary sequence.) The same is expected to hold for the fractional parts of $n^k \alpha$, for any integer $k\geq 2$ and $\alpha$ of bounded type, but this remains unproven \cite{Rudnick98,Rudnick01,Marklof03,HeathBrown10}. Numerical experiments also suggest that the fractional parts of $n^\beta$ may have an exponential gap distribution provided $\beta\in(0,\frac12)\cup(\frac12,1)$. In the case $\beta=\frac12$, Elkies and McMullen \cite{Elkies04} have calculated an explicit formula for the limiting gap distribution, which is evidently not exponential. Remarkably, their distribution coincides with the limiting gap distribution of directions of points in the affine lattice $\ZZ^2+\veca$ for any fixed vector $\veca\notin\QQ^2$ \cite{Marklof10}. Both of these findings follow from the equidistribution of translates of certain curves on the homogeneous space $\ASL(2,\ZZ)\backslash\ASL(2,\RR)$.
In the present paper we show that also the fractional parts of $\log_b n$ have a non-exponential limiting gap distribution, which we calculate explicitly. Our derivation reduces quickly to an argument used by Barra and Gaspard \cite{Barra00} in the context of spectral statistics of quantum graphs. The key ingredient here is Weyl equidistribution on multi-dimensional tori, similar in spirit to \cite[Chap.~3]{Kac59}. 


\vspace{10pt}\stepcounter{sectionno}\S \arabic{sectionno}.
To state our main results, let us denote by $\xi_n$ the fractional parts of $\log_b n$. We denote by $\{\xi_{n,N}\}_{n=1,\ldots,N}$ the ordered set of the first $N$ elements of $\xi_n$, so that
\begin{equation}
0\leq \xi_{1,N} \leq \xi_{2,N} \leq \ldots \leq \xi_{N,N} < 1.
\end{equation}
For purely notational reasons it will be convenient to set $\xi_{N+1,N}:=\xi_{1,N}+1$, and also consider the gap between the first and last element mod 1; this additional gap has of course no effect on the existence and form of the limiting gap distribution. 

For given $N$, the gap distribution $P_N(s)$ is defined as the probability density on $\RR_{\geq 0}$,
\begin{equation}
P_N(s) = \frac1N \sum_{n=1}^N \delta(s-N(\xi_{n+1,N}-\xi_{n,N})) .
\end{equation}

We denote by $\scrT$ the set of transcendental numbers $b>1$; $b=\e$ is a natural example, cf.~Figure \ref{fig1}. The technique we present here also works for algebraic $b$, but in general leads to more intricate limit distributions.
In \S \ref{BINTEGER} we discuss the simple case of integer base $b$, and in \S \ref{NEWSEC} the case when the base is the $r$th root of an integer. 

\begin{figure}
\begin{center}
\includegraphics[width=0.7\textwidth]{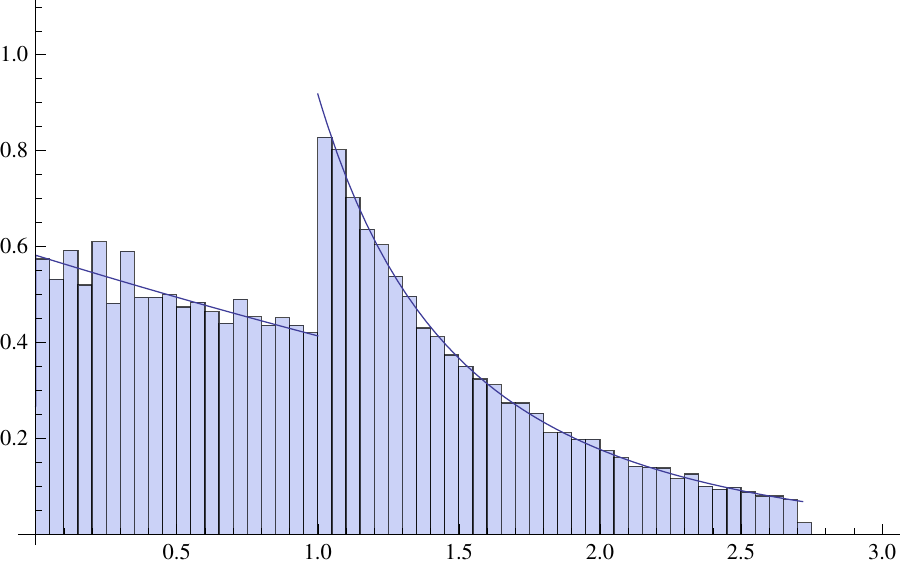}
\end{center}
\caption{The distribution of gaps between the fractional parts of $\log n$, where $n=1,\ldots,10^4$. The piecewise continuous curve is the limit distribution $P(s)$ of Theorem \ref{thm1} for $b=\e$.} 
\label{fig1}
\end{figure}

\begin{figure}
\begin{center}
\includegraphics[width=0.7\textwidth]{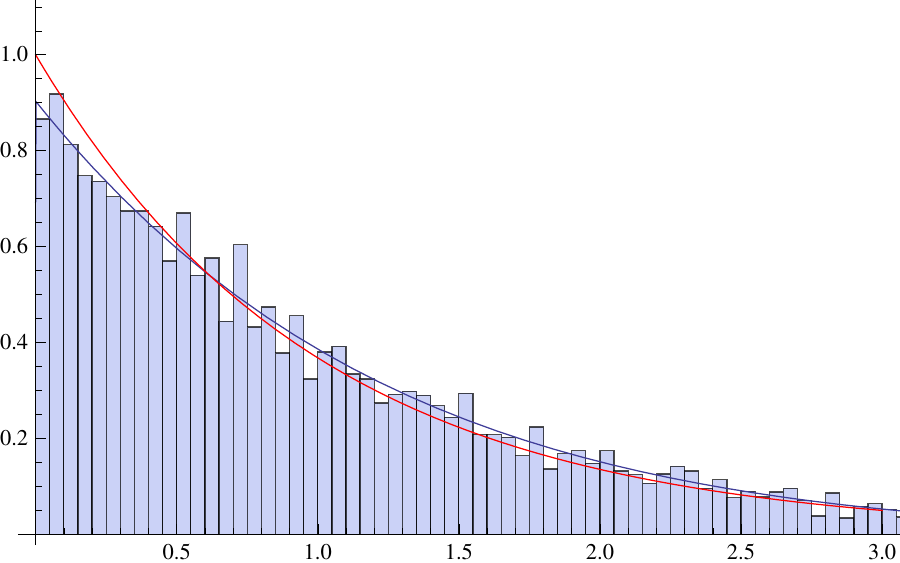}
\end{center}
\caption{The distribution of gaps between the fractional parts of $\log_b n$ with base $b=\e^{1/5}=1.221402758\ldots$, where $n=1,\ldots,10^4$. The blue curve is the limit distribution $P(s)$ of Theorem \ref{thm1}, the red curve is the exponential distribution $\e^{-s}$.} \label{fig2}
\end{figure}

\begin{thm}\label{thm1}
Let $b\in\scrT$. For any bounded continuous $f:\RR_{\geq 0} \to \RR$,
\begin{equation}
\lim_{N\to\infty}\int_0^\infty f(s)\, P_N(s)\, ds  = \int_0^\infty f(s)\, P(s)\, ds,
\end{equation}
where
\begin{equation}\label{Ps}
P(s)=\begin{cases}
\frac{1}{\log b}\, s^{-2} (F(s\log b)-F(sb^{-1}\log b)) & \text{if }\:0<s<\frac{1}{\log b} \\
-\frac{1}{\log b}\, s^{-2}F(sb^{-1}\log b) & \text{if }\: \frac{1}{\log b} < s<\frac{b}{\log b} \\
0 & \text{if }\: s>\frac{b}{\log b},
\end{cases}
\end{equation}
\begin{equation}
F(a) = a \frac{\partial}{\partial a} (a;b^{-1})- (a;b^{-1})
\end{equation}
and
\begin{equation}
(a;q) = \prod_{n=0}^\infty (1-aq^n)  \qquad \text{(q-Pochhammer symbol).}
\end{equation}
\end{thm}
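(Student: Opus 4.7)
Following Barra--Gaspard \cite{Barra00}, the plan is to reduce the computation of $P(s)$ to the gap density of a superposition of independent arithmetic progressions with uniformly distributed phases, and then to evaluate the resulting integral explicitly. Write $K=\lfloor\log_b N\rfloor$ and $\eta=\log_b N-K$, and partition $\{1,\ldots,N\}$ into blocks $B_k=\{n\in\ZZ:b^k\le n<b^{k+1}\}$ for $0\le k\le K-1$, together with the partial block $B_K=\{n:b^K\le n\le N\}$. On $B_k$ one has $\xi_n=\log_b n-k=\log_b\bigl(1+(j-\alpha_k)/b^k\bigr)$, where $\alpha_k:=\{b^k\}$ and $n=\lfloor b^k\rfloor+j$, so near any $\xi\in[0,1)$ the block-$k$ points form a perturbed arithmetic progression with spacing $\delta_k(\xi)=b^{-k-\xi}/\log b$ and phase $\mu_k(\xi):=\{b^{k+\xi}\}\in[0,1)$. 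Rescaling gaps by $N$ and setting $\theta:=(\xi-\eta)\bmod 1\in[0,1)$, a direct sum of the local block densities $b^{k+\xi}\log b$ shows (independently of $\eta$) that the rescaled empirical density converges to $\bar\rho(\theta)=b^\theta\log b/(b-1)$, while the rescaled block spacings become $p_j(\theta):=b^{j+1-\theta}/\log b$ for $j=0,1,2,\ldots$, where $p_0(\theta)$ is the finest block present at $\xi$.

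The key input is Weyl equidistribution: because $b\in\scrT$, the numbers $1,b,b^2,\ldots,b^{d-1}$ are $\QQ$-linearly independent for every $d\ge 1$, so the linear curve $t\mapsto(t,t/b,\ldots,t/b^{d-1})\bmod\ZZ^d$ equidistributes on $\TT^d$ as $t$ ranges over an interval of length tending to infinity. Applied to $t=b^{K+\xi}$ for $\xi\in[0,1)$, this shows that the vector of top $d$ phases
\[
(\mu_K(\xi),\mu_{K-1}(\xi),\ldots,\mu_{K-d+1}(\xi))=(\{b^{K+\xi}\},\{b^{K-1+\xi}\},\ldots,\{b^{K-d+1+\xi}\})
\]
becomes uniformly distributed on $\TT^d$ as $K\to\infty$ when $\xi$ is sampled uniformly from $[0,1)$. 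Consequently, averages over $\xi$ of any bounded continuous functional of the top $d$ phases coincide, in the limit, with the corresponding averages over independent uniform random phases.

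For the resulting superposition of independent APs with spacings $p_0(\theta),p_1(\theta),\ldots$ and uniform phases, the void probability of an interval of length $s$ factorises as $\prod_{j\ge 0}\max(0,1-s/p_j(\theta))$; with $a:=sb^{\theta-1}\log b$ this equals
\[
V_\theta(s)=\begin{cases}(a;b^{-1})&\text{if }a<1,\\ 0&\text{if }a\ge 1.\end{cases}
\]
The Palm-theoretic relation $P_\theta(s)=V_\theta''(s)/\bar\rho(\theta)$ then yields
\[
P(s)=\int_0^1\bar\rho(\theta)\,P_\theta(s)\,d\theta=\int_0^1 V_\theta''(s)\,d\theta,
\]
where the second $s$-derivative is distributional so as to register the Dirac mass $(1/b;b^{-1})/p_0(\theta)$ at $s=p_0(\theta)$ coming from the jump of $V_\theta'$. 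Changing variables $\theta\mapsto a$ (so $d\theta=da/(a\log b)$) and using the elementary identity
\[
\int a\,\partial_a^2(a;b^{-1})\,da=a\,\partial_a(a;b^{-1})-(a;b^{-1})=F(a)
\]
evaluates the integral in closed form: for $s<1/\log b$ the $a$-range is $(sb^{-1}\log b,\,s\log b)$, giving the first line of (\ref{Ps}); for $s\in(1/\log b,b/\log b)$ the range truncates to $(sb^{-1}\log b,\,1)$, and the Dirac contribution exactly cancels the boundary term $F(1)=-(1/b;b^{-1})$, leaving the middle line; for $s>b/\log b$ all $V_\theta$ vanish and $P(s)=0$.

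The principal technical obstacle is the truncation error as $d\to\infty$: one must show that discarding the blocks with $k<K-d$ perturbs the limiting gap distribution by $o(1)$ as $d\to\infty$, uniformly in $N$. Since these tail blocks contribute only a fraction $O(b^{-d})$ of the local density and their rescaled spacings $p_j(\theta)$ grow geometrically in $j$, a quantitative Weyl bound combined with a union bound over rescaled windows of size $O(1)$ suffices. Minor further technicalities are the $O(\delta_k^2)$ non-linearity in the arithmetic-progression approximation within each block (negligible compared to the linear term) and the transition at $\xi=\eta$ between ``block $K$ present'' and ``block $K$ absent'', whose effect on the $\xi$-average vanishes in the limit.
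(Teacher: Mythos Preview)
Your plan is correct and rests on the same core ingredients as the paper's proof: the Barra--Gaspard reduction to a superposition of arithmetic progressions with phases equidistributed on a torus by Weyl's theorem (using that $1,b,b^2,\ldots$ are $\QQ$-independent for transcendental $b$), the relation $P=E''(0,\cdot)$ between the gap density and the void probability, the truncation $J\to\infty$ with $O(b^{-J})$ error, and the final change of variables $a=sb^{x-1}\log b$ together with integration by parts to produce $F(a)$.

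The organizational difference is that the paper first applies the global distribution-function map $T(x)=(b^x-1)/(b-1)$ to the sequence $\eta_n$, which turns the points into an \emph{exact} multiset sum $\uplus_{j}(\beta^{(N)}+\omega_j^{-1}\ZZ)$ with fixed frequencies $\omega_j=b^{-j}(b-1)$ (this is the key identity \eqref{keyeq}); Barra--Gaspard is then applied once to this position-independent superposition (Theorems \ref{thm4} and \ref{mainthm}), yielding Theorem \ref{thm3}, and only afterwards is the map $T$ undone to recover $P(x,s)$ and $P(s)$. Your route instead linearizes \emph{locally} at each $\theta$, obtaining $\theta$-dependent spacings $p_j(\theta)$ and a conditional density $P_\theta(s)$, and integrates over $\theta$ at the end. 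The two are equivalent---your $V_\theta''(s)$ is exactly the paper's $P(x,s)$ with $x\leftrightarrow\theta$---but the paper's global linearization buys you exact arithmetic progressions, so the ``$O(\delta_k^2)$ non-linearity'' you flag simply does not arise, and the Weyl argument is cleaner (equidistribution of a discrete orbit rather than a weighted continuous curve). Conversely, your approach is more direct in that it bypasses the auxiliary Theorems \ref{thm2} and \ref{thm3} and goes straight to the final integral.
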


The derivative of the Pochhammer symbol is explicitly
\begin{equation}
\frac{\partial}{\partial a} (a;q) = - (a;q) \sum_{j=0}^\infty \frac{1}{q^{-j}-a} .
\end{equation}
Note that $P(s)$ has jump discontinuities at $s=\frac1{\log b}$ and $\frac b{\log b}$;
however both the left and right limits exist at these points, and are finite.
We also note that $\lim_{s\to0^+}P(s)=\frac{\log b}{b-1}$.

A more elementary formulation, which is equivalent to the above, is that for every $s\geq 0$
\begin{equation}
\lim_{N\to\infty} \frac{1}{N} \big|\{ n\leq N : N(\xi_{n+1,N}-\xi_{n,N})> s \}\big| = \int_s^\infty P(s')\, ds'. 
\end{equation}
We will show in the final paragraph \S \ref{LASTPARA} of this paper that, in the limit $b\to 1$, $P(s)$ converges to the exponential distribution $\e^{-s}$, cf.~Figure \ref{fig2}.

\vspace{10pt}\stepcounter{sectionno}\S \arabic{sectionno}.
To gain a better insight into the limiting gap distribution $P(s)$ for fixed $b$, let us recall that the fractional parts of $\log_b n$ are {\em not} uniformly distributed mod 1 \cite{Kuipers}. However, the sequence of $\eta_n$ $(n=1,\ldots,N$) given by the fractional parts of $\log_b (n/N)=\log_b n-\log_b N$ has a limit density mod 1, and evidently the same gap distribution (to see this, recall that we have added the gap between the first and last element of the sequence mod 1, and the ordering in the sequence remains the same). It is an easy exercise to show that for any interval $A\subset[0,1]$,
\begin{equation}
\lim_{N\to\infty} \frac1N \big|\{ n\leq N : \eta_n \in A \} \big| =  \int_A \rho(x) \, dx
\end{equation}
with density
\begin{equation}\label{rho}
\rho(x)= \frac{\log b}{b-1} \, b^x .
\end{equation}


\vspace{10pt}\stepcounter{sectionno}\S \arabic{sectionno}.
A more refined statement of Theorem \ref{thm1} is to consider the joint distribution of $\eta_n$ and the subsequent gap; define the corresponding probability density on $[0,1]\times\RR_{\geq 0}$ by
\begin{equation}
P_N(x,s) = \frac1N \sum_{n=1}^N \delta(x-\eta_{n,N}) \; \delta(s-N(\eta_{n+1,N}-\eta_{n,N})) ,
\end{equation}
where $\{\eta_{n,N}\}_{n=1,\ldots,N}$ is again the ordered set of the first $N$ elements of $\eta_n$. The joint distribution allows us to compare the statistics of gaps between elements near one point $x$ of the unit interval with the gap statistics at a second point $x'$.   

The explicit formula for the limit distribution will involve the density
\begin{equation}\label{Rab}
R(a,b) = 
(b^{-1};b^{-1})\,   \delta(a-1)+
\begin{cases}
\displaystyle
\frac{\partial^2}{\partial a^2} (a;b^{-1})  & \text{if $0<a<1$}\\[5pt]
 0 & \text{if $a>1$.}
\end{cases}
\end{equation}
The second derivative of the Pochhammer symbol is
\begin{equation}
\frac{\partial^2}{\partial a^2} (a;q) = (a;q) \sum_{\substack{j,k=0\\ j\neq k}}^{\infty} \frac{1}{(q^{-j}-a)(q^{-k}-a)} .
\end{equation}

\begin{thm}\label{thm2}
Let $b\in\scrT$. For any bounded continuous $f:[0,1]\times\RR_{\geq 0} \to \RR$,
\begin{equation}\label{thm2res}
\lim_{N\to\infty} \int_0^1 \int_0^\infty f(x,s)\, P_N(x,s)\,ds\,dx  =\int_0^1 \int_0^\infty f(x,s)\, P(x,s)\,ds\,dx ,
\end{equation}
where
\begin{equation}
P(x,s)=(b^{x-1}\log b)^2 R(s b^{x-1}\log b, b) .
\end{equation}
\end{thm}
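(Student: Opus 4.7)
My plan is to adapt the Barra--Gaspard superposition-of-lattices argument that underlies Theorem~\ref{thm1} to the refined statistic that retains the position of each sample point. Observe first that
\begin{equation*}
\int_0^1\!\!\int_0^\infty f(x,s)\,P_N(x,s)\,ds\,dx \;=\; \frac1N\sum_{n=1}^N f\big(\eta_{n,N},\,N(\eta_{n+1,N}-\eta_{n,N})\big),
\end{equation*}
so by a standard approximation argument it is enough to handle test functions of the form $f(x,s)=g(x)\phi(s)$ with $g$ continuous on $[0,1]$ and $\phi$ continuous of compact support in $\RR_{\geq 0}$. Partitioning $[0,1]$ into small intervals $I_\ell$ centred at points $x_\ell$ and using uniform continuity of $g$, the task reduces to showing, for each $\ell$, that the empirical distribution of the rescaled gaps $N(\eta_{n+1,N}-\eta_{n,N})$ over indices $n$ with $\eta_{n,N}\in I_\ell$ converges weakly to $P(x_\ell,\cdot)/\rho(x_\ell)$, while the proportion of such indices tends to $\int_{I_\ell}\rho(x)\,dx$ (as furnished by the density formula \eqref{rho}).

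To analyse the local point process near a base point $x\in[0,1]$, I split $\{1,\ldots,N\}$ into \emph{levels}: level $k\geq 1$ consists of $\{n:Nb^{-k}\leq n<Nb^{-k+1}\}$, on which $\eta_n=k+\log_b(n/N)\in[0,1)$. In the rescaled coordinate $u=N(\eta-x)$, a Taylor expansion of $\log_b$ about $Nb^{x-k}$ shows that, to leading order in $1/N$, the level-$k$ points form the shifted lattice
\begin{equation*}
\Lambda_k\;=\;c_k^{-1}(\ZZ-\theta_k),\qquad c_k:=b^{x-k}\log b,\qquad \theta_k:=\{Nb^{x-k}\},
\end{equation*}
of density $c_k$, and the full rescaled process near $x$ is the superposition $\bigcup_{k\geq 1}\Lambda_k$ of total density $\sum_{k\geq 1}c_k=b^x\log b/(b-1)=\rho(x)$. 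Since $b$ is transcendental, for Lebesgue-a.e.\ $x\in[0,1]$ the numbers $1,b^{x-1},\ldots,b^{x-K}$ are $\QQ$-linearly independent, so Weyl's theorem yields equidistribution of $(\theta_1,\ldots,\theta_K)$ on $[0,1]^K$ as $N\to\infty$; combined with the tail bound $\sum_{k>K}c_k=O(b^{-K})$, this allows me to treat the shifts $\theta_k$ in the limit as independent uniform variables---precisely the Barra--Gaspard setting.

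For such a superposition of independent shifted lattices, the void probability of a rescaled interval of length $s$ factorises,
\begin{equation*}
V_x(s)\;=\;\prod_{k=1}^\infty(1-sc_k)_+ \;=\;\begin{cases}(sc_1;b^{-1})&\text{if }0\leq sc_1\leq 1,\\ 0&\text{if }sc_1>1,\end{cases}
\end{equation*}
and the Palm--Khinchin identity $V_x''(s)=\rho(x)p_x(s)$, with $p_x$ the Palm gap density at $x$, gives $P(x,s)=V_x''(s)$. The smooth regime $0<sc_1<1$ contributes $c_1^2\,\frac{\partial^2}{\partial a^2}(a;b^{-1})\big|_{a=sc_1}$; at $s=1/c_1$ the derivative $V_x'$ jumps from $-c_1\prod_{j\geq 1}(1-b^{-j})=-c_1(b^{-1};b^{-1})$ to $0$, producing a $\delta$-contribution which, after the change of variable $a=sc_1$, equals $c_1^2(b^{-1};b^{-1})\delta(a-1)$. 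Summing the smooth and singular parts and comparing with \eqref{Rab} gives $P(x,s)=(b^{x-1}\log b)^2\,R(sb^{x-1}\log b,b)$, as claimed. The main obstacle I anticipate is not the Barra--Gaspard void computation (essentially identical to that needed for Theorem~\ref{thm1}) but rather the joint control of position and gap: the Weyl equidistribution must be exhibited uniformly enough in $x\in I_\ell$ that it can be interchanged with the local averaging, and the atomic contribution to $P(x,s)$, supported on the curve $\{sb^{x-1}\log b=1\}$, must be shown to survive in the weak limit tested against continuous functions $f$.
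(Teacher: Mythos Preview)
Your route is genuinely different from the paper's, and the difference is worth spelling out. The paper does \emph{not} localise in $x$ or Taylor-expand the logarithm. Instead it first applies the global increasing bijection $T(x)=(b^x-1)/(b-1)$ of \eqref{TMAP} to the sequence $\eta_n$, obtaining $\tilde\eta_n$; the point of this map is that, by \eqref{keyeq}, the rescaled sequence $N\tilde\eta_n$ is an \emph{exact} superposition of arithmetic progressions $\beta^{(N)}+\omega_j^{-1}\ZZ$ with a single, $x$-independent set of frequencies $\omega_j=b^{-j}(b-1)$. Barra--Gaspard (Theorem~\ref{mainthm}) is then applied once on the growing window $[0,aN]$, yielding Theorem~\ref{thm3} (the analogue of Theorem~\ref{thm2} for $\tilde\eta_n$); the joint density $\tilde P$ is independent of $x$. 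Finally Theorem~\ref{thm2} is obtained from Theorem~\ref{thm3} by the smooth change of variables $\fT(x,s)=(T(x),T'(x)s)$ in \S\ref{THM2EQTHM3para}, which is what produces the factor $\rho(x)^2$ and the $x$-dependence in $P(x,s)$. No partition of $[0,1]$, no linearisation, and no uniformity-in-$x$ issues arise.

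Your localisation strategy can in principle be pushed through, but there is a real gap in the justification. The step ``Weyl's theorem yields equidistribution of $(\theta_1,\ldots,\theta_K)$ as $N\to\infty$, so the shifts may be treated as independent uniform'' is not the mechanism that makes Barra--Gaspard work here. For each fixed $N$ the phases $\theta_k(N)=\{Nb^{x_\ell-k}\}$ are deterministic; the gap count on $I_\ell$ is an average over \emph{position} inside a window of length $\asymp N|I_\ell|\to\infty$, and it is the Weyl equidistribution hidden in the proof of Theorem~\ref{thm4} (on the torus $(\RR/\ZZ)^{J-1}$, as the window grows) that forces the limit, \emph{uniformly in the phases}. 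Equidistribution of $\theta_k(N)$ along the integer sequence $N$ is neither needed nor sufficient for the pointwise limit you want. Relatedly, the linear independence you need is that of the frequencies $c_k=b^{x-k}\log b$, which holds for \emph{every} $x$ (they are a common nonzero multiple of $b^{-1},\ldots,b^{-K}$), not merely a.e.\ $x$; the ``a.e.'' enters only in your phase argument, which you should discard.

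A second issue you underplay is the Taylor step: the lattice approximation $\Lambda_k=c_k^{-1}(\ZZ-\theta_k)$ has error $O(N|I_\ell|^2)$ in the rescaled coordinate, so the partition must depend on $N$ with $N^{-1}\ll|I_\ell|\ll N^{-1/2}$, and you then need the convergence in Theorem~\ref{mainthm} to be uniform over the family of frequency vectors $(c_k(x_\ell))$. This can be arranged (all these vectors are scalar multiples of one another), but it is exactly the bookkeeping that the paper's global straightening via $T$ eliminates in one stroke.
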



\vspace{10pt}\stepcounter{sectionno}\S \arabic{sectionno}.
To unravel the previous statements further, let us consider a strictly increasing function $[0,1]\to[0,1]$ which maps the sequence $\eta_j$ to a new sequence $\tilde\eta_j$ which now is uniformly distributed mod 1. This function is given by (cf.~\cite{nato})
\begin{equation}\label{TMAP}
x \mapsto  \int_0^x \rho(y) \, dy = \frac{b^x-1}{b-1},
\end{equation}
and thus
\begin{equation}
\tilde\eta_n = \frac{b^{\eta_n}-1}{b-1} , \qquad 
\tilde\eta_{n,N} = \frac{b^{\eta_{n,N}}-1}{b-1} .
\end{equation}
We have, explicitly, for $n\in[N b^{-k}, N b^{-k+1})$ and $k\in\ZZ_{\geq0}$,
\begin{equation}\label{keyeq}
\tilde\eta_n  = \frac{b^k n - N}{N(b-1)}  .
\end{equation}

We will see in \S \ref{THM2EQTHM3para} that the knowledge of the joint distribution $P_N(x,s)$ for the original sequence and the analogue for the rescaled and ordered sequence,
\begin{equation}
\tilde P_N(x,s) = \frac1N \sum_{n=1}^N \delta(x-\tilde \eta_{n,N}) \; \delta(s-N(\tilde\eta_{{n+1},N}-\tilde\eta_{n,N})) 
\end{equation}
are equivalent: 

\begin{thm}\label{thm3}
Let $b\in\scrT$. For any bounded continuous $f:[0,1]\times\RR_{\geq 0} \to \RR$,
\begin{equation}\label{thm3res}
\lim_{N\to\infty} \int_0^1 \int_0^\infty f(x,s)\, \tilde P_N(x,s)\,ds\,dx  =\int_0^1 \int_0^\infty f(x,s)\, \tilde P(s)\,ds\,dx ,
\end{equation}
where
\begin{equation}\label{thm3tildePdef}
\tilde P(s) = (1-b^{-1})^2 R\big((1-b^{-1}) s, b\big) .
\end{equation}
\end{thm}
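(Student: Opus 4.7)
The plan is to deduce Theorem~\ref{thm3} from Theorem~\ref{thm2} via the change of variables \eqref{TMAP}. Write $\tau(x):=(b^x-1)/(b-1)$, so that $\tilde\eta_n=\tau(\eta_n)$ and $\tau'(x)=\rho(x)=\frac{\log b}{b-1}b^x$. Since $\tau$ is strictly increasing on $[0,1]$, the ordering is preserved and $\tilde\eta_{n,N}=\tau(\eta_{n,N})$ for every $n$. A second-order Taylor expansion gives
\begin{equation*}
N(\tilde\eta_{n+1,N}-\tilde\eta_{n,N}) = \tau'(\eta_{n,N})\, N(\eta_{n+1,N}-\eta_{n,N})+E_{n,N},
\end{equation*}
with $E_{n,N}=O\bigl(N(\eta_{n+1,N}-\eta_{n,N})^2\bigr)$, the implied constant depending only on $b$.

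The main obstacle is to show that the errors $E_{n,N}$ contribute negligibly in the limit. I would handle this by a truncation/tightness argument: for any fixed $M>0$, on the set of indices with scaled gap $s_n:=N(\eta_{n+1,N}-\eta_{n,N})\leq M$ we have $|E_{n,N}|=O(M^2/N)$, so uniform continuity of the bounded $\tilde f$ on a suitable compact set kills this contribution as $N\to\infty$. The complementary set has density tending to $0$ as first $N\to\infty$ and then $M\to\infty$, by Theorem~\ref{thm1} (whose limit is supported on $[0,b/\log b]$); its contribution is controlled by $2\|\tilde f\|_\infty$ times this vanishing quantity.

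With the error absorbed, I would apply Theorem~\ref{thm2} to the bounded continuous test function $g(x,s):=\tilde f(\tau(x),\tau'(x)s)$ to obtain
\begin{equation*}
\lim_{N\to\infty}\int_0^1\!\int_0^\infty \tilde f(x,s)\,\tilde P_N(x,s)\,ds\,dx = \int_0^1\!\int_0^\infty \tilde f(\tau(x),\tau'(x)s)\,P(x,s)\,ds\,dx.
\end{equation*}
Substituting $\tilde x=\tau(x)$, $\tilde s=\tau'(x)s$ on the right (Jacobian $\rho(x)^2$) and using the elementary identity $b^{x-1}\log b=(1-b^{-1})\rho(x)$, both $sb^{x-1}\log b=(1-b^{-1})\tilde s$ and $(b^{x-1}\log b)^2/\rho(x)^2=(1-b^{-1})^2$ become independent of $x$. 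The $x$-integration is therefore trivial and reproduces exactly $\tilde P(\tilde s)=(1-b^{-1})^2 R((1-b^{-1})\tilde s,b)$, proving Theorem~\ref{thm3}. The reverse implication alluded to in the excerpt follows by applying the same argument to $\tau^{-1}$.
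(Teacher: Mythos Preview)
Your argument is a correct proof that Theorem~\ref{thm2} implies Theorem~\ref{thm3}; indeed, it is precisely the ``entirely analogous argument'' the paper alludes to at the start of \S\ref{THM2EQTHM3para}. The change of variables via $\tau$ and the identity $b^{x-1}\log b=(1-b^{-1})\rho(x)$ are exactly right, and your truncation argument for the Taylor error is essentially the paper's own device in \S\ref{THM2EQTHM3para} (where $f$ is first reduced to compact support so that the large-gap tail is handled trivially rather than via Theorem~\ref{thm1}).

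However, as a proof of Theorem~\ref{thm3} this is circular: in the paper's logical structure, Theorem~\ref{thm3} is proved \emph{first} and Theorem~\ref{thm2} (and hence Theorem~\ref{thm1}) is derived from it, not the other way around. The paper's actual proof of Theorem~\ref{thm3} (\S\ref{THM3pfsec1}--\S\ref{THM3pfsec2}) is direct: one observes from \eqref{keyeq} that the rescaled points $N\tilde\eta_n$ lie, up to $O(b^{-J})$ exceptions, in the superposition $\scrS_N=\biguplus_{j=1}^J(\beta^{(N)}+\omega_j^{-1}\ZZ)$ with $\omega_j=b^{-j}(b-1)$; since $b$ is transcendental these frequencies are $\QQ$-linearly independent, so the Barra--Gaspard equidistribution result (Theorem~\ref{mainthm}, via Weyl on $(\RR/\ZZ)^{J-1}$) gives the gap statistics for each finite $J$, and one lets $J\to\infty$. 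Your change-of-variables argument never touches this equidistribution input, which is the actual substance of the theorem; it only shows the (easy) equivalence of the $\eta$ and $\tilde\eta$ formulations once one of them is known.
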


Note that the limit distribution is independent of $x$. This is not a consequence of  $\tilde\eta_j$ being uniformly distributed mod 1, but reflects the fact that the gap distribution is the same if we restrict the sequence to an arbitrary subinterval of $[0,1]$. 

We will first prove Theorem \ref{thm3} in \S \ref{THM3pfsec1} and \S \ref{THM3pfsec2}, and then retrace our steps by subsequently showing in \S \ref{THM2EQTHM3para} that Theorem \ref{thm3} implies Theorem \ref{thm2}, where
\begin{equation}\label{Pxs}
P(x,s) = \tilde P(\rho(x) s)  \rho(x)^2 =(b^{x-1}\log b)^2 R(s b^{x-1}\log b, b).
\end{equation}
We obtain Theorem \ref{thm1} from Theorem \ref{thm2} by taking an $x$-independent test function.


\vspace{10pt}\refstepcounter{sectionno}\S \arabic{sectionno}.
\label{THM3pfsec1}
We now turn to the proof of Theorem \ref{thm3} and study a more general setting. Consider a finite sequence $\omega_1,\ldots,\omega_J$ of positive real numbers (the ``frequencies'').
With each $\omega_j$ we associate a real number $\beta_j$. We are interested in the statistical properties of the 
multiset
\begin{equation}
\scrS = \biguplus_{j=1}^J (\beta_j + \omega_j^{-1} \ZZ) .
\end{equation}
Here $\uplus$ denotes multiset sum, i.e.\ a union where we keep track of the multiplicity of each element (this is only relevant if the sets $\beta_j+\omega_j^{-1}\ZZ$, $j=1,\ldots,J$ are not mutually disjoint).
The number of elements of $\scrS$ that fall into the interval $I=[t-\frac{L}{2},t+\frac{L}{2}]$ is 
\begin{equation}
N(t,L) 
= \sum_{j=1}^J N_j(t,L) ,\qquad N_j(t,L)=\big| I \cap (\beta_j + \omega_j^{-1} \ZZ) \big| .
\end{equation}
Evidently $N_j(t,L)$ is periodic in $t$ with period $\omega_j^{-1}$.
Furthermore
\begin{equation}\label{est}
\omega_jL-1 <  N_j(t,L)\leq\omega_jL+1.
\end{equation}
Therefore
\begin{equation}
\lim_{L\to\infty}\frac{N(t,L)}{L} =\sum_{j=1}^J\omega_j,
\end{equation}
which means  that the asymptotic density of $\scrS$ in $\RR$ equals $ \sum_j\omega_j$.

For any $x\in\RR$ we write $|x|_\ZZ\in[0,\frac12]$ for the distance between $x$ and the nearest integer. We also set
\begin{equation}
n(x,L) = \big|[x-\tfrac{L}{2},x+\tfrac{L}{2}]\cap \ZZ \big|.
\end{equation}
The verification of the following lemma is a simple exercise.
\begin{lem}\label{COUNTINGLEM}
Given $x\in\RR$, $L\geq0$ and $k\in\ZZ_{\geq0}$,
we have $n(x,L)=k$ if and only if 
$|x-\frac12k|_\ZZ$ is both $\geq\frac12(k-L)$ and $>\frac12(L-k)$.
\end{lem}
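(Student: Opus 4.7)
The plan is to reduce to a short computation by periodicity. Both $n(x,L)$ and $|x-k/2|_\ZZ$ are invariant under $x \mapsto x+1$, so one may assume $x \in (-\tfrac12,\tfrac12]$ and set $\phi := |x|_\ZZ = |x| \in [0,\tfrac12]$.

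Next I would write down the sorted distances from $x$ to the integers in closed form. Since $\phi \leq \tfrac12$, one checks directly that
\[
d_1 = \phi,\quad d_2 = 1-\phi,\quad d_3 = 1+\phi,\quad d_4 = 2-\phi,\quad \ldots,
\]
i.e.\ $d_{2j-1} = (j-1)+\phi$ and $d_{2j} = j-\phi$ for $j \geq 1$. By definition, $n(x,L)=k$ is the number of these distances that are $\leq L/2$, and because the $d_i$ are already in sorted order this is equivalent to $d_k \leq L/2 < d_{k+1}$ (with the left inequality read as vacuous when $k=0$).

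The proof then concludes with a parity split on $k$. When $k=2j$ is even, $k/2$ is an integer, so $|x-k/2|_\ZZ = \phi$, and $d_{2j} \leq L/2 < d_{2j+1}$ unpacks directly to $\phi \geq (k-L)/2$ and $\phi > (L-k)/2$, which is the lemma's condition. When $k=2j+1$ is odd, $k/2$ is a half-integer, so the distance from $x \in (-\tfrac12,\tfrac12]$ to the nearest half-integer is $\tfrac12-\phi$, giving $|x-k/2|_\ZZ = \tfrac12-\phi$. The inequalities $d_{2j+1} \leq L/2 < d_{2j+2}$ read $j+\phi \leq L/2 < (j+1)-\phi$; after rearranging to $\phi \leq (L-k+1)/2$ and $\phi < (k+1-L)/2$ and substituting $\phi = \tfrac12 - |x-k/2|_\ZZ$, one again obtains exactly $|x-k/2|_\ZZ \geq (k-L)/2$ and $|x-k/2|_\ZZ > (L-k)/2$.

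No real obstacle arises — the lemma truly is a simple exercise. The only point that warrants a moment's care is the strict versus non-strict inequalities: these correspond precisely to the fact that $[x-L/2,x+L/2]$ is closed, so $d_k = L/2$ does contribute but $d_{k+1}=L/2$ would push the count to $k+1$.
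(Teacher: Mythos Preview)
Your proof is correct; the periodicity reduction, the explicit sorted-distance formula, and the parity split all check out, including the strict/non-strict bookkeeping. The paper itself does not give a proof of this lemma (it is declared ``a simple exercise''), so your argument supplies exactly the intended elementary verification.
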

In particular, if $x$ is picked at random with respect to the uniform probability measure $\PP_0$ on $[0,1]$,
then
\begin{equation}\label{PP0}
\PP_0 (n(x,L)=k) = 
E_1(k,L):=\begin{cases}
1-|k-L| & \text{if $L-1< k < L+1$}\\
0  & \text{otherwise.}
\end{cases}
\end{equation}

The following observation is due to Barra and Gaspard \cite{Barra00}.

\begin{thm}\label{thm4}
Assume that the frequencies $\omega_1,\ldots,\omega_J$ are linearly independent over $\QQ$. Then, for $-\infty<a<b<\infty$
and $k\in\ZZ_{\geq0}$,
\begin{equation}\label{thm4RES}
\lim_{T\to\infty} \frac{\meas(\{ t\in[aT,bT] : N(t,L)=k \})}{(b-a)T}= E(k,L) 
\end{equation}
where
\begin{equation}
E(k,L)= \sum_{k_1+\ldots+k_J= k} \prod_{j=1}^J E_1\big(k_j,\omega_jL\big) .
\end{equation}
In particular,
\begin{equation}
E(0,L)=
\begin{cases}
\prod_{j=1}^J \big(1-\omega_jL\big) & \text{if $L<\max(\omega_1,\ldots,\omega_J)^{-1}$}\\
0  & \text{otherwise.}
\end{cases}
\end{equation}
The convergence in \eqref{thm4RES} is uniform over all choices of $\beta_1,\ldots,\beta_J\in\RR$
(but keeping $\omega_1,\ldots,\omega_J$ fixed).
\end{thm}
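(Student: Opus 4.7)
The plan is to reduce $N(t,L)$ to a piecewise constant function on the torus $\TT^J=\RR^J/\ZZ^J$ and then apply Weyl equidistribution to the linear flow generated by $(\omega_1,\ldots,\omega_J)$. Substituting $m\mapsto\beta_j+\omega_j^{-1}m$ in the definition of $N_j(t,L)$ gives
\begin{equation}
N_j(t,L)=n\bigl(\omega_j(t-\beta_j),\,\omega_jL\bigr),
\end{equation}
and $n(\cdot,L)$ is $1$-periodic in its first argument (shifting the interval by $1$ permutes the integers it contains), so $N_j(t,L)$ depends on $t$ only through $\omega_jt-\omega_j\beta_j$ modulo one. Viewing $\vecy(t):=(\omega_1t,\ldots,\omega_Jt)$ and $\vecc:=(\omega_1\beta_1,\ldots,\omega_J\beta_J)$ as elements of $\TT^J$, we thus have $N(t,L)=\Phi_L(\vecy(t)-\vecc)$ with
\begin{equation}
\Phi_L(\vecy)=\sum_{j=1}^{J}n(y_j,\omega_jL),
\end{equation}
so $\mathbf{1}\{N(t,L)=k\}$ is the pullback of $\chi_k:=\mathbf{1}\{\Phi_L=k\}$ along $t\mapsto\vecy(t)-\vecc$.

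Since $\omega_1,\ldots,\omega_J$ are $\QQ$-linearly independent, the orbit $\{\vecy(t)-\vecc:t\in[aT,bT]\}$ equidistributes on $\TT^J$ as $T\to\infty$ with respect to Haar measure. The standard Fourier-series proof of Weyl's theorem gives
\begin{equation}
\frac{1}{(b-a)T}\int_{aT}^{bT}\exp\bigl(2\pi\i\langle\vecm,\vecy(t)-\vecc\rangle\bigr)\,dt=O_{\vecm}(T^{-1})
\end{equation}
for every nonzero $\vecm\in\ZZ^J$, with an implied constant independent of the starting phase $\vecc$; this delivers uniform Weyl equidistribution in $\vecc$, hence in the $\beta_j$. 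The function $\chi_k$ is the indicator of a finite union of axis-aligned boxes in $\TT^J$, so its boundary has Haar measure zero; sandwiching $\chi_k$ between continuous $\chi_k^{\pm}$ on $\TT^J$ whose integrals differ by less than a prescribed $\varepsilon$ and applying uniform Weyl equidistribution to $\chi_k^{\pm}$ yields
\begin{equation}
\lim_{T\to\infty}\frac{\meas\{t\in[aT,bT]:N(t,L)=k\}}{(b-a)T}=\int_{\TT^J}\chi_k(\vecy)\,d\vecy,
\end{equation}
with convergence uniform in $\beta_1,\ldots,\beta_J$.

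It remains to evaluate the integral. Because $\Phi_L$ is a sum of functions each depending on a single coordinate and Haar measure on $\TT^J$ is the product, the integral factorises, and by \eqref{PP0} the $j$-th factor for fixed $k_j$ equals $\PP_0(n(y_j,\omega_jL)=k_j)=E_1(k_j,\omega_jL)$. Summing over the splittings $k_1+\cdots+k_J=k$ reproduces precisely $E(k,L)$. For $k=0$ only the term with all $k_j=0$ survives, and $E_1(0,\omega_jL)=1-\omega_jL$ when $\omega_jL<1$ and vanishes otherwise, so the product collapses to $\prod_j(1-\omega_jL)$ if $L<\max(\omega_1,\ldots,\omega_J)^{-1}$ and is zero otherwise, as claimed. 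I expect the only mildly technical step to be making the uniformity of the Weyl estimate in $\vecc$ explicit, which is routine via the Fourier bound above; this uniformity is precisely what yields the uniformity in $\beta_1,\ldots,\beta_J$ asserted in the theorem.
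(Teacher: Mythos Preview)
Your argument is correct and rests on the same core idea as the paper's proof---Weyl equidistribution on a torus coming from the periodicity of each $N_j(t,L)$---but the execution differs in a way worth noting. The paper first decomposes $[aT,bT]$ into intervals of length $\omega_1^{-1}$, defines phases $\alpha_{j,n}\in\RR/\ZZ$ tracking how $\beta_j+\omega_j^{-1}\ZZ$ sits relative to the $n$th such interval, and then applies the \emph{discrete} Weyl theorem to the rotation $n\mapsto(\alpha_{2,n},\ldots,\alpha_{J,n})$ on the $(J-1)$-torus (the first coordinate being constant). You instead observe directly that $N(t,L)=\Phi_L(\vecy(t)-\vecc)$ for a piecewise constant $\Phi_L$ on the full $J$-torus and apply the \emph{continuous} Weyl theorem to the linear flow $t\mapsto(\omega_1t,\ldots,\omega_Jt)$. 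Your route is somewhat cleaner: it avoids the bookkeeping of the $\alpha_{j,n}$ and the auxiliary functions $f_{k_1,\ldots,k_J}$, and the factorisation of $\int_{\TT^J}\chi_k$ is immediate from the product structure, whereas the paper has to change order of integration and substitute to recover the product form. The paper's reduction to a discrete orbit, on the other hand, makes the uniformity in $\beta_1,\ldots,\beta_J$ slightly more transparent, since the classical discrete Weyl bound is automatically uniform in the starting point; your Fourier-coefficient argument achieves the same, but one should note explicitly (as you essentially do) that the sandwich functions $\chi_k^{\pm}$ can be chosen once and for all independently of $\vecc$, so that the approximation step does not spoil uniformity.
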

\begin{proof}
Decomposing $[aT,bT]$ into intervals of length $\omega_1^{-1}$
and using $N(t,L)=\sum_{j=1}^JN_j(t,L)$, we see that $\meas(\{ t\in[aT,bT] : N(t,L)=k \})$ equals
\begin{align}
\sum_{k_1+\ldots+k_J=k}\sum_{n=n_a}^{n_b-1}
\omega_1^{-1}\meas(\{ s\in[0,1) : N_j(\omega_1^{-1}(s+n),L)=k_j\text{ for }j=1,\ldots,J \})+E,
\end{align}
where $n_a=\lfloor\omega_1 aT\rfloor$, $n_b=\lfloor\omega_1 bT\rfloor$, and $E$ is a real number
whose absolute value is bounded above by the measure of the symmetric difference between 
$[aT,bT]$ and $[n_a\omega_1^{-1},n_b\omega_1^{-1}]$; thus $|E|\leq2\omega_1^{-1}$.
Let us define $\alpha_{j,n}\in\RR/\ZZ$ through the relation
$\beta_j+\omega_j^{-1}\ZZ=n\omega_1^{-1}+\omega_j^{-1}(\alpha_{j,n}+\ZZ)$.
Then $N_j(\omega_1^{-1}(s+n),L)=k_j$ holds if and only if 
$\bigl|[\omega_1^{-1}s-\frac L2,\omega_1^{-1}s+\frac L2]\cap\omega_j^{-1}(\alpha_{j,n}+\ZZ)\bigr|=k_j$,
and by Lemma \ref{COUNTINGLEM} this holds if and only if
$2\bigl|\frac{\omega_j}{\omega_1}s-\alpha_{j,n}-\frac12k_j\bigr|_\ZZ$ is both $\geq k_j-\omega_jL$ and
$>\omega_jL-k_j$.
It follows that 
$\meas(\{ t\in[aT,bT] : N(t,L)=k \})$ equals
\begin{align}
\sum_{k_1+\ldots+k_J=k}\sum_{n=n_a}^{n_b-1}
\omega_1^{-1}f_{k_1,\ldots,k_J}(\alpha_{1,n},\alpha_{2,n},\ldots,\alpha_{J,n})+E,
\end{align}
where $f_{k_1,\ldots,k_J}:(\RR/\ZZ)^J\to[0,1]$ is defined by
\begin{align}
f_{k_1,\ldots,k_J}(\alpha_{1},\alpha_{2},\ldots,\alpha_{J})
=\int_0^1\prod_{j=1}^J I\Bigl(2\Bigl|\frac{\omega_j}{\omega_1}s-\alpha_{j}-\frac12k_j\Bigr|_\ZZ>|k_j-\omega_jL|\Bigr)\,ds.
\end{align}
Note that each function $f_{k_1,\ldots,k_J}$ is continuous on $(\RR/\ZZ)^J$.
It follows from our definition of $\alpha_{j,n}$ that
$\alpha_{j,n+1}=\alpha_{j,n}-\omega_j\omega_1^{-1}\mod\ZZ$.
In particular $\alpha_{1,n}$ is independent of $n$; in fact $\alpha_{1,n}=\alpha_1:=\omega_1\beta_1$ in $\RR/\ZZ$ for all $n$.
On the other hand our assumption implies that the numbers $1,\omega_2\omega_1^{-1},\omega_3\omega_1^{-1},\ldots,
\omega_J\omega_1^{-1}$ are linearly independent over $\QQ$, and hence by Weyl equidistribution we have
\begin{align}
\frac{\sum_{n=n_a}^{n_b-1}f_{k_1,\ldots,k_J}(\alpha_{1,n},\alpha_{2,n},\ldots,\alpha_{J,n})}{n_b-n_a}
\to \int_{(\RR/\ZZ)^{J-1}}f_{k_1,\ldots,k_J}(\alpha_1,\alpha_{2},\ldots,\alpha_{J})\,d\alpha_2\ldots d\alpha_J
\end{align}
as $n_b-n_a\to\infty$.
Note that the convergence here is uniform over all choices of $\beta_1,\ldots,\beta_J\in\RR$.
The right hand side equals
\begin{align}
\int_{(\RR/\ZZ)^{J-1}}\int_0^1\prod_{j=1}^J 
I\Bigl(2\Bigl|\frac{\omega_j}{\omega_1}s-\alpha_{j}-\frac12k_j\Bigr|_\ZZ>|k_j-\omega_jL|\Bigr)\,ds,
\end{align}
and changing order of integration and then substituting $\alpha_j:=x_j+\frac{\omega_j}{\omega_1}s-\frac12k_j$
($x_j\in\RR/\ZZ$) we see that the expression factors as $\prod_{j=1}^J E_1(k_j,\omega_jL)$.
Finally noticing also that $n_b-n_a\sim\omega_1(b-a)T$ as $T\to\infty$, we obtain \eqref{thm4RES}.
\end{proof}

We now turn to the gap distribution. We order the elements of $\scrS$ and label them as 
\begin{equation}
\ldots \leq \lambda_{-2} \leq \lambda_{-1} \leq \lambda_0 \leq \lambda_1\leq \lambda_2 \leq \ldots  .
\end{equation}

\begin{thm}\label{mainthm}
Assume that the frequencies $\omega_j$ are linearly independent over $\QQ$,
and that $\omega_1$ is the largest among $\omega_1,\ldots,\omega_J$. 
Then, for $-\infty<a<b<\infty$, and any $s\geq0$, $s\neq\omega_1^{-1}$,
\begin{equation}\label{mainthmres}
\lim_{T\to\infty} \frac{1}{(b-a)T}| \{ j \in\ZZ 
:\lambda_j\in[aT,bT]\text{ and } \lambda_{j+1}-\lambda_j > s \}|
= \int_s^\infty P_\omega(s') ds'
\end{equation}
where
\begin{equation}
P_\omega(s) = 
\omega_1 \prod_{j=2}^J \Big(1-\frac{\omega_j}{\omega_1}\Big) \delta(s-\omega_1^{-1})
+\begin{cases}
\displaystyle
\sum_{\substack{h,i=1\\ h\neq i}}^J \omega_h\omega_i \bigg[ \prod_{\substack{j=1\\ j\neq h,i}}^J \big(1-\omega_js\big)\bigg] & \text{if $0< s< \omega_1^{-1}$}\\[5pt]
0 & \text{if $s > \omega_1^{-1}$.}
\end{cases}
\end{equation}
The convergence in \eqref{mainthmres} is uniform over all choices of $\beta_1,\ldots,\beta_J\in\RR$
(but keeping $\omega_1,\ldots,\omega_J$ fixed).
\end{thm}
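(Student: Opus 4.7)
The plan is to deduce Theorem \ref{mainthm} from Theorem \ref{thm4} by upgrading convergence of tail integrals to pointwise convergence of a monotone family. Define
$$F_T(s) := \frac1{(b-a)T}\,|\{j\in\ZZ : \lambda_j\in[aT,bT],\, \lambda_{j+1}-\lambda_j>s\}|,\qquad F(s):=\int_s^\infty P_\omega(s')\,ds'.$$
The goal is $F_T(s)\to F(s)$ for every $s\neq\omega_1^{-1}$, uniformly in $\beta_1,\ldots,\beta_J$; the crucial structural input is that $F_T$ is monotone non-increasing in $s$.

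First I would relate $F_T$ to the empty-interval statistic of Theorem \ref{thm4}. By Fubini,
$$\tilde F_T(L):=\int_L^\infty F_T(s)\,ds=\frac{1}{(b-a)T}\sum_{j:\lambda_j\in[aT,bT]}(\lambda_{j+1}-\lambda_j-L)^+.$$
Since every gap of $\scrS$ has length at most $\omega_1^{-1}$, only $O(1)$ gaps meet the boundary of $[aT,bT]$, and, using the equivalence $N(t,L)=0\iff t\in(\lambda_j+L/2,\lambda_{j+1}-L/2)$ for some $j$, the sum above differs from $\meas\{t\in[aT,bT]\,:\,N(t,L)=0\}$ by $O(1)$. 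Theorem \ref{thm4} with $k=0$ then yields
$$\tilde F_T(L)\longrightarrow E(0,L)=\prod_{j=1}^J(1-\omega_j L)\,\mathbf{1}[L<\omega_1^{-1}],$$
uniformly in $\beta$. A short calculation verifies $\int_L^\infty F(s)\,ds=E(0,L)$: on $(0,\omega_1^{-1})$ the continuous part of $P_\omega$ is $f''(s)$ with $f(s):=\prod_j(1-\omega_j s)$, while the coefficient of its delta at $\omega_1^{-1}$ equals the jump of $-f'$ there (only the $i=1$ term survives, since all others contain a factor $1-\omega_1/\omega_1=0$), so two antiderivatives recover $f(L)=E(0,L)$ for $L<\omega_1^{-1}$.

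Finally I would upgrade integrated convergence to pointwise convergence of $F_T$ via monotonicity: for any $h>0$,
$$F_T(s)\leq\frac{\tilde F_T(s-h)-\tilde F_T(s)}{h},\qquad F_T(s)\geq\frac{\tilde F_T(s)-\tilde F_T(s+h)}{h}.$$
Sending $T\to\infty$ (uniformly in $\beta$) and then $h\to 0^+$ sandwiches $F_T(s)$ between the one-sided derivatives of $-E(0,\cdot)$ at $s$, which coincide wherever $E(0,\cdot)$ is differentiable, i.e.\ at every $s\neq\omega_1^{-1}$. Uniformity in $\beta$ is inherited from Theorem \ref{thm4}. The main obstacle, such as it is, is the jump of $F$ at $\omega_1^{-1}$, which is precisely the value excluded from the statement.
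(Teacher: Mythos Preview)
Your argument is correct and is essentially a self-contained implementation of the approach the paper takes: the paper's proof is a two-line citation of Theorem~2.2 in \cite{nato}, noting that $P_\omega(s)=\frac{d^2}{ds^2}E(0,s)$, while you spell out precisely how the void probability $E(0,L)$ from Theorem~\ref{thm4} is converted into the tail $F(s)$ via the monotone sandwich. In substance the two proofs coincide; yours has the advantage of being self-contained and making the role of the excluded point $s=\omega_1^{-1}$ explicit.
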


\begin{proof}
This is a corollary of Theorem \ref{thm4}, cf.~e.g.~Theorem 2.2 in \cite{nato}.
The limiting gap density is given by the formula
\begin{equation}
P_\omega(s) = \frac{d^2 E(0,s)}{ds^2} .
\end{equation}
\end{proof}


\vspace{10pt}\refstepcounter{sectionno}\S \arabic{sectionno}.
\label{THM3pfsec2}
\textit{Proof of Theorem \ref{thm3}.}
It suffices to prove that \eqref{thm3res} holds when $f$ is the characteristic function of a box
$[0,a]\times[0,A]$ with $A\neq(1-b^{-1})^{-1}$,
i.e.\ to prove that for any fixed $a\in[0,1]$ and $A\geq0$, $A\neq(1-b^{-1})^{-1}$, we have
\begin{align}\label{thm3pf1}
G_{a,A}(N)\to a\int_0^A \tilde P(s)\,ds
\end{align}
as $N\to\infty$, where
\begin{align}
G_{a,A}(N):=
\frac1N\Big|\Big\{n\in\{1,\ldots,N\} :\tilde\eta_{n,N}\in[0,a],\: N(\tilde\eta_{n+1,N}-\tilde\eta_{n,N})\in[0,A]\Big\}\Big|
\end{align}
(cf., e.g., \cite[Example 2.3]{Billingsley}).

Set $\omega_j=b^{-j}(b-1)$ and $\beta_j^{(N)}=\beta^{(N)}=-\frac N{b-1}$ for $j=1,2,\ldots$.
For any given $J,N\in\ZZ_{\geq1}$ we note that by \eqref{keyeq}, 
$\{N\tilde\eta_1,\ldots,N\tilde\eta_N\}$ equals the multiset sum of
$(\beta^{(N)}+\omega_j^{-1}\ZZ)\cap[0,N)$ for $j=1,2,\ldots,J$ and a remaining multiset 
of cardinality $\lceil Nb^{-J}\rceil$ consisting of $N\tilde\eta_n$ for $n=N$ and all $n<Nb^{-J}$.
Hence if we order the elements of $\scrS_N:=\uplus_{j=1}^J(\beta^{(N)}+\omega_j^{-1}\ZZ)$ as
$\ldots\leq\lambda_{-2}\leq \lambda_{-1} \leq \lambda_0 \leq \lambda_1\leq\lambda_2\leq \ldots$, then
\begin{align}
G_{a,A}(N)=\frac1N\Big|\Big\{j\in\ZZ :\lambda_j\in[0,aN],\: \lambda_{j+1}-\lambda_j\in[0,A]\Big\}\Big|
+O\bigl(N^{-1}+b^{-J}\bigr).
\end{align}
Since $b$ is transcendental, the frequencies $\omega_1,\ldots,\omega_J$ are linearly independent over $\QQ$;
thus Theorem \ref{mainthm} applies, and we obtain, for any fixed $J\in\ZZ_{\geq1}$,
\begin{align}\label{thm3pf2}
&\limsup_{N\to\infty}G_{a,A}(N)\leq a\int_0^A P_{\omega,J}(s')\, ds'+O(b^{-J});
\\\notag
&\liminf_{N\to\infty}G_{a,A}(N)\geq a\int_0^A P_{\omega,J}(s')\, ds'-O(b^{-J}),
\end{align}
where $P_{\omega,J}(s)=0$ if $s>\omega_1^{-1}$ 
while for $s<\omega_1^{-1}$ we have
\begin{align}
P_{\omega,J}(s)
=\omega_1 \prod_{j=2}^J \Big(1-\frac{\omega_j}{\omega_1}\Big) \delta(s-\omega_1^{-1})
+\frac{d^2}{ds^2}\prod_{j=1}^J \big(1-\omega_js\big).
\end{align}
We now compute that
\begin{align}
\int_0^A P_{\omega,J}(s')\, ds'=\sum_{j=1}^J\omega_j-
\begin{cases}0  &\text{if }A>\omega_1^{-1}
\\[5pt]
{\displaystyle\sum_{j=1}^J\omega_j\prod_{\substack{i=1\\(i\neq j)}}^J(1-\omega_iA)} 
&\text{if }A<\omega_1^{-1}.
\end{cases}
\end{align}
Hence, letting $J\to\infty$ in \eqref{thm3pf2} and using $\omega_j=b^{-j}(b-1)$, we conclude
\begin{align}
\lim_{N\to\infty} G_{a,A}(N)=a+a\left.
\begin{cases}0  &\text{if }A>\frac b{b-1}
\\[5pt]
{\displaystyle\Bigl(\frac{\partial}{\partial s}\bigl((1-b^{-1})s;b^{-1}\bigr)\Bigr)_{|s=A}}&\text{if }A<\frac b{b-1}
\end{cases}
\right\}
=a\int_0^A \tilde P(s)\,ds,
\end{align}
where the last equality follows by a direct computation using \eqref{Rab} and \eqref{thm3tildePdef}.
Hence \eqref{thm3pf1} holds, and Theorem \ref{thm3} is proved.
\hfill$\square$


\vspace{10pt}\refstepcounter{sectionno}\S \arabic{sectionno}.
\label{THM2EQTHM3para}
Let us now prove that Theorem \ref{thm3} implies Theorem \ref{thm2}.
(An entirely analogous argument also shows that Theorem \ref{thm2} implies Theorem \ref{thm3},
so that the statements of these two theorems are in fact equivalent.)
Given a bounded continuous function $f:[0,1]\times\RR_{\geq0}\to\RR$,
we wish to prove that \eqref{thm2res} holds.
Since the sequence of probability measures $P_N(x,s)\,ds\,dx$ is tight
(cf.,\ e.g.,\ Lemma 2.1 in \cite{nato}),
a familiar 
approximation argument shows that without loss of generality we may assume that $f$ has compact support,
i.e.\ there is some $B>0$ such that $f(x,s)=0$ whenever $s\geq B$.

Let $T:[0,1]\to[0,1]$ be the map in \eqref{TMAP} and define $\fT:[0,1]\times\RR_{\geq0}\to[0,1]\times\RR_{\geq0}$ through
$\fT(x,s)=(T(x),T'(x)s)$. 
This is a bijection, with inverse given by
\begin{align}
\fT^{-1}(x,s)=\Bigl(T^{-1}(x),\frac s{T'(T^{-1}(x))}\Bigr).
\end{align}
Now apply Theorem \ref{thm3} with the test function $f\circ\fT^{-1}$.
Then in 
\eqref{thm3res} we have
\begin{align}
\int_0^1 \int_0^\infty f\circ\fT^{-1}(x,s)\, \tilde P_N(x,s)\,ds\,dx
=\frac1N\sum_{n=1}^N f(\eta_{n,N},s_{n,N}),
\end{align}
where
${\displaystyle s_{n,N}:=N\cdot\frac{T(\eta_{{n+1},N})-T(\eta_{n,N})}{T'(\eta_{n,N})}}.$
Let $\scrF_N$ be the set of those $n\in\{1,\ldots,N\}$ for which $\eta_{{n+1},N}-\eta_{n,N}\leq BbN^{-1}$.
Then since $T'(x)=\rho(x)$ is continuous and bounded away from zero on $[0,1]$ we have
$s_{n,N}=N(\eta_{n+1,N}-\eta_{n,N})+o(1)$
uniformly over all $n\in\scrF_N$ as $N\to\infty$,
and hence, since $f$ is uniformly continuous, 
\begin{align}\label{THM3IMPLTHM2pf1}
f(\eta_{n,N},s_{n,N})=f(\eta_{n,N},N(\eta_{n+1,N}-\eta_{n,N}))+o(1),
\end{align}
uniformly over all $n\in\scrF_N$.
On the other hand if $n\notin\scrF_N$ then $N(\eta_{n+1,N}-\eta_{n,N})>Bb>B$ and also
$s_{n,N}>Bb\frac{\inf T'}{\sup T'}=B$,
so that both sides of \eqref{THM3IMPLTHM2pf1} vanish.
Hence \eqref{THM3IMPLTHM2pf1} in fact holds uniformly over \textit{all} $n\in\{1,\ldots,N\}$ as $N\to\infty$,
and it follows that the limit in the left hand side of \eqref{thm2res} exists, and equals the limit in \eqref{thm3res}
(with $f\circ\fT^{-1}$).
Finally substituting $(x,s)=\fT(x',s')$ in the right hand side of \eqref{thm3res} we see
(via \eqref{Pxs}) that this limit equals the right hand side of \eqref{thm2res}, and the proof is complete.


\vspace{10pt}\stepcounter{sectionno}\S \arabic{sectionno}.
Theorem \ref{thm1} is now a direct consequence of Theorem \ref{thm2}. As to the explicit formula for the gap distribution, 
\begin{equation}
\begin{split}
P(s) & = \int_0^1 P(x,s)\, dx \\
& =\int_0^1 (b^{x-1}\log b)^2 R(s b^{x-1}\log b, b) \, dx .
\end{split}
\end{equation}
With the variable substitution $a=s b^{x-1}\log b$, $da=s b^{x-1} (\log b)^2\, dx=(\log b) \, a\, dx$ we have
\begin{equation}
P(s)  = \frac{1}{s^2\log b} \int_{s b^{-1}\log b}^{s\log b}  R(a, b) \, a\, da .
\end{equation}
Recall the definition of $R(a,b)$ in \eqref{Rab}.
The integral over the first term in \eqref{Rab} yields $\frac{(b^{-1};b^{-1})}{s^2\log b}$ if $s b^{-1}\log b < 1< s\log b$ and $0$ otherwise. For the second term, we use integration by parts,
\begin{equation}
\begin{split}
\int_{a_0}^{a_1} a \frac{\partial^2}{\partial a^2} (a;b^{-1})\, da & = \bigg[a \frac{\partial}{\partial a} (a;b^{-1})\bigg]_{a_0}^{a_1} - \int_{a_0}^{a_1} \frac{\partial}{\partial a} (a;b^{-1})\, da \\
& = \bigg[a \frac{\partial}{\partial a} (a;b^{-1})- (a;b^{-1})\bigg]_{a_0}^{a_1} ,
\end{split}
\end{equation}
where $a_0=\min\{ 1, s b^{-1}\log b\}$ and $a_1=\min\{ 1, s \log b\}$. This yields \eqref{Ps}.


\begin{figure}
\begin{center}
\includegraphics[width=0.7\textwidth]{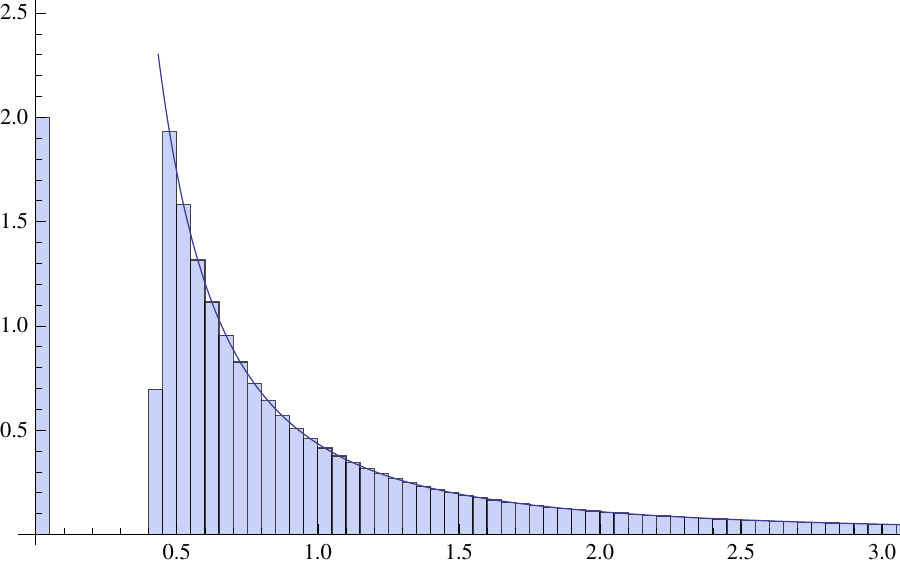}
\end{center}
\caption{The distribution of gaps between the fractional parts of $\log_b n$ with base $b=10$, where $n=1,\ldots,10^4$. The piecewise continuous curve is the limit distribution $P(s)$ in \eqref{Ps10}.} \label{fig3}
\end{figure}

\begin{figure}
\begin{center}
\includegraphics[width=0.7\textwidth]{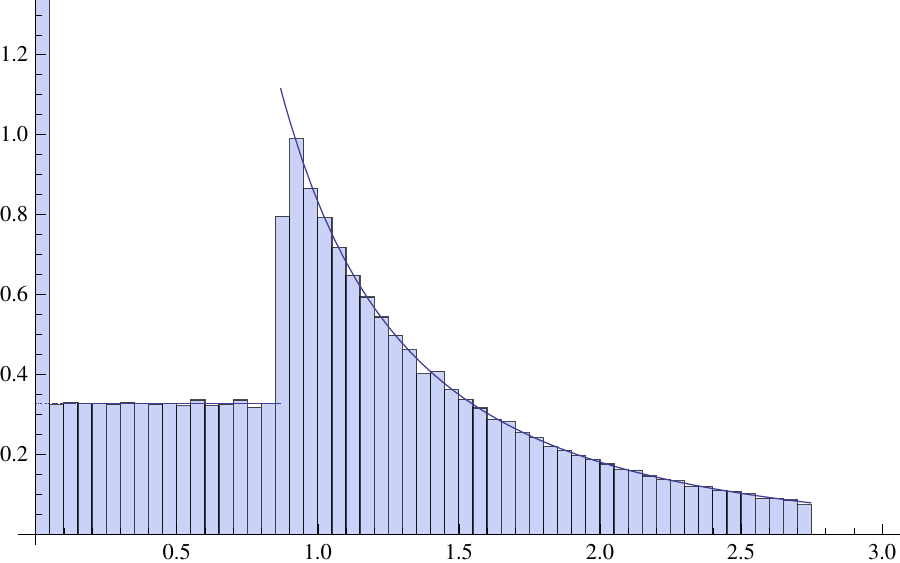}
\end{center}
\caption{The distribution of gaps between the fractional parts of $\log_b n$ with base $b=\sqrt{10}$, where $n=1,\ldots,10^4$. The piecewise continuous curve is the limit distribution $P(s)$ in \eqref{Ps_r} with $r=2$.} \label{fig4}
\end{figure}

\begin{figure}
\begin{center}
\includegraphics[width=0.7\textwidth]{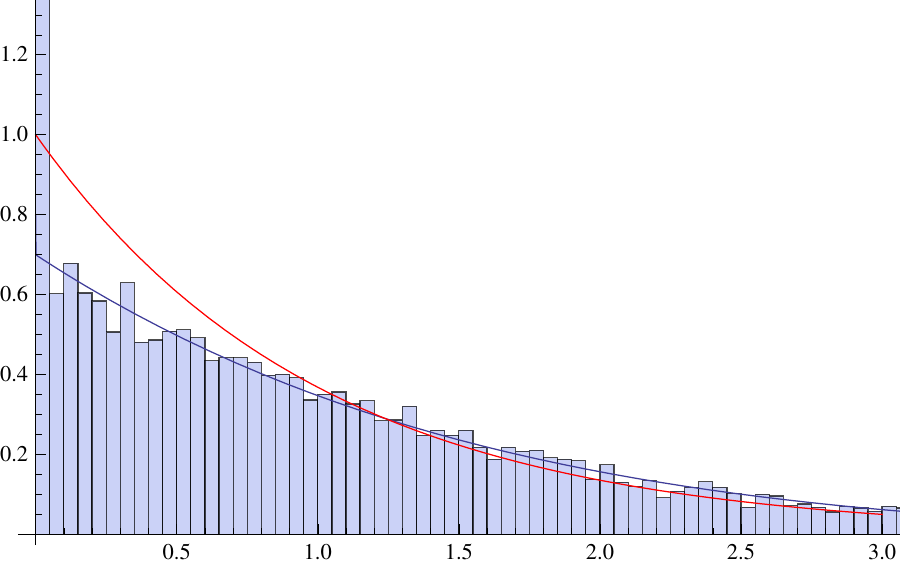}
\end{center}
\caption{The distribution of gaps between the fractional parts of $10\log_{10} n$ ($=\log_b n$ with base $b=10^{1/10}=1.258925411\ldots$), where $n=1,\ldots,10^4$. The blue curve is the limit distribution $P(s)$ in \eqref{Ps_r} with $r=10$. The red curve is the exponential distribution $\e^{-s}$.} \label{fig5}
\end{figure}

\vspace{10pt}\refstepcounter{sectionno}\S \arabic{sectionno}.
\label{BINTEGER}
As we have noted, the technique presented here works also for algebraic $b$.
Let us consider briefly the simplest case of $b>1$ being an integer.
In this case, the multiset sum $\scrS_N=\uplus_{j=1}^J(\beta^{(N)}+\omega_j^{-1}\ZZ)$ 
considered in the proof of Theorem \ref{thm3} has all its points lying in $\beta^{(N)}+\omega_1^{-1}\ZZ$,
i.e.\ the only gap lengths appearing are $0$ and $\omega_1^{-1}=(1-b^{-1})^{-1}$;
furthermore $\scrS_N$ is periodic with period $\omega_J^{-1}=b^{J-1}\omega_1^{-1}$.
One finds that the average multiplicity of $\beta^{(N)}+\omega_1^{-1}k$ in $\scrS_N$ for varying $k\in\ZZ$
equals $\frac{1-b^{-J}}{1-b^{-1}}$.
Hence following the proof of Theorem \ref{thm3} and taking the limit $J\to\infty$, we conclude that for $b>1$ an
integer, we have a limit result as in \eqref{thm3res} in Theorem \ref{thm3}, but with the limit density being given simply by
\begin{align}
\tilde P(s)=b^{-1}\delta(s)+(1-b^{-1})\delta\bigl(s-(1-b^{-1})^{-1}\bigr).
\end{align}
As to the limit of the unscaled sequence $\eta_n$, the first equality in \eqref{Pxs} yields
\begin{equation}
P(x,s)=\frac{\log b}{b-1}\, b^{x-1} \delta(s) + (\log b) b^{x-1} \delta\biggl(s-\frac{b^{1-x}}{\log b}\biggr) ,
\end{equation}
and therefore
\begin{equation}\label{Ps10}
P(s)=b^{-1}  \delta(s) + 
\begin{cases} 
\frac{1}{\log b}\, s^{-2}  & \text{ if $\frac{1}{\log b}<s<\frac{b}{\log b}$} \\
0 & \text{ otherwise.}
\end{cases}
\end{equation}

\vspace{10pt}\refstepcounter{sectionno}\S \arabic{sectionno}.
\label{NEWSEC}
The findings of the previous paragraph can be generalised to bases of the form $b=m^{1/r}$, with $m$ and $r$ positive integers such that $m^{1/p}$ is not an integer for any prime divisor $p$ of $r$. In this case, the multiset sum $\scrS_N=\uplus_{j=1}^J(\beta^{(N)}+\omega_j^{-1}\ZZ)$ 
considered in the proof of Theorem \ref{thm3} has all points in the set $\cup_{j=1}^r (\beta^{(N)}+\omega_j^{-1}\ZZ)$. Our assumption implies that the polynomial $X^r-m$ is irreducible over $\QQ$ (since any monic polynomial of degree $d<r$ dividing $X^r-m$ must have zeroth coefficient of absolute value $m^{d/r}$, which is irrational). Hence $\omega_1,\ldots,\omega_r$ are linearly independent over $\QQ$, and we now see by the same argument as in the proof of Theorem \ref{thm4} that the limit \eqref{thm4RES} exists in the case $k=0$ and is given by
\begin{equation}
E(0,L)=
\begin{cases}
\prod_{j=1}^r \big(1-b^{-j}(b-1) L\big) & \text{if $L<(1-b^{-1})^{-1}$}\\
0  & \text{otherwise.}
\end{cases}
\end{equation}
Using the finite q-Pochhammer symbol 
\begin{equation}
(a;q)_r = \prod_{n=0}^{r-1} (1-aq^n)  ,
\end{equation}
we can write this as
\begin{equation}
E(0,L)=
\begin{cases}
((1-b^{-1}) L; b^{-1})_r & \text{if $L<(1-b^{-1})^{-1}$}\\
0  & \text{otherwise.}
\end{cases}
\end{equation}
We note that for $L<(1-b^{-1})^{-1}$
\begin{equation}
-\frac{d}{dL} E(0,L) = (1-b^{-1}) E(0,L) \sum_{j=0}^{r-1} \frac{1}{b^j-(1-b^{-1}) L} 
\end{equation}
and thus
\begin{equation}
-\frac{d}{dL} E(0,L) \bigg|_{L=0}= (1-b^{-1}) \sum_{j=0}^{r-1} \frac{1}{b^j} = 1-b^{-r}.
\end{equation}
The fact that this value is less than $1$ is due to the non-trivial multiplicity of values in our sequence, which results in a positive density of zero gaps. The statements of Theorems \ref{thm1}--\ref{thm3} therefore hold with the following limit distributions. The limiting gap distribution of Theorem \ref{thm1} is
\begin{equation}\label{Ps_r}
P(s)=b^{-r} \, \delta(s) + \begin{cases}
 \frac{1}{\log b}\, s^{-2} (F_r(s\log b)-F_r(sb^{-1}\log b)) & \text{if }\:0<s<\frac{1}{\log b} \\
-\frac{1}{\log b}\, s^{-2}F_r(sb^{-1}\log b) & \text{if }\: \frac{1}{\log b} < s<\frac{b}{\log b} \\
0 & \text{if }\: s>\frac{b}{\log b},
\end{cases}
\end{equation}
with
\begin{equation}
F_r(a) = a \frac{\partial}{\partial a} (a;b^{-1})_r- (a;b^{-1})_r .
\end{equation}
The joint limiting gap distribution of Theorem \ref{thm2} is
\begin{equation}
P(x,s)=\frac{\log b}{b-1}\,b^{x-r} \, \delta(s) + (b^{x-1}\log b)^2 R_r(s b^{x-1}\log b, b) 
\end{equation}
and the rescaled gap distribution of Theorem \ref{thm3} is
\begin{equation}\label{thm3tildePdef_r}
\tilde P(s) = b^{-r} \, \delta(s) + (1-b^{-1})^2 R_r\big((1-b^{-1}) s, b\big) ,
\end{equation}
with
\begin{equation}\label{Rab_r}
R_r(a,b) = 
(b^{-1};b^{-1})_{r-1}\,   \delta(a-1)+
\begin{cases}
\displaystyle
\frac{\partial^2}{\partial a^2} (a;b^{-1})_r  & \text{if $0<a<1$}\\[5pt]
 0 & \text{if $a>1$.}
\end{cases}
\end{equation}


\vspace{10pt}\refstepcounter{sectionno}\S \arabic{sectionno}.
\label{LASTPARA}
We conclude this investigation with a few remarks on the family of limit distributions which we have obtained for different values of $b\in\scrT$. It is helpful to introduce the random point process in $\RR$ given by the sequence
\begin{equation}
\varphi_N^{(b)} = \{ N(\tilde\eta_n + t + m) : 1\leq n\leq N,\; m\in\ZZ\}
\end{equation}
where $t$ is uniformly distributed in $[0,1]$. This process is clearly stationary and has intensity one. By using the same strategy as in the proof of Theorem \ref{thm3}, one can deduce from Theorem \ref{thm4} a limit law for the one-dimensional distribution of $\varphi_N^{(b)}$: For every closed interval $A\subset\RR$, $k\in\ZZ_{\geq 0}$,
\begin{equation}\label{con1}
\lim_{N\to\infty} \PP( |\varphi_N^{(b)} \cap A | = k ) = E^{(b)}(k,\meas(A)),
\end{equation}
where 
\begin{equation}\label{con2}
E^{(b)}(k,L)= \lim_{J\to\infty} \sum_{k_1+\ldots+k_J= k} \prod_{j=1}^J E_1\big(k_j,(b-1)b^{-j} L\big) ,
\end{equation}
with $E_1(k,L)$ as in \eqref{PP0}. In the case $k=0$, relation \eqref{con1} in fact follows directly from Theorem \ref{thm3} (exploiting again Theorem 2.2 in \cite{nato}), where
\begin{equation}
\tilde P^{(b)}(s) = \frac{d^2 E^{(b)}(0,s)}{ds^2} .
\end{equation}
Although \eqref{con1} requires $b\in\scrT$, the family of distributions \eqref{con2} is well defined for any real $b\in(1,\infty)$. Let us briefly analyse the limiting cases $b\to\infty$ and $b\to 1$.

For $b\to\infty$,  $E^{(b)}(k,L)$ converges to the statistics of the point process given by the randomly shifted integer lattice $\ZZ+t$, with $t$ uniformly distributed in $[0,1]$. That is, $E^{(b)}(k,L) \to E_1(k,L)$ as in \eqref{PP0}. The rescaled gap distribution satisfies
\begin{equation}
\tilde P^{(b)}(s) \to R(s,\infty)=\delta(s-1) .
\end{equation}
The limit of the raw $P^{(b)}(s)$ distribution is slightly more involved since the asymptotic density $\rho^{(b)}(x)$ in \eqref{rho} converges to $\delta(x-1)$. We therefore cannot expect $P^{(b)}(s)$ to converge to a non-trivial limit without further rescaling. Let us extend $\rho^{(b)}(x)$ to a probability density on $\RR_{\geq 0}$ by setting $\rho^{(b)}(x)=0$ for $x>1$. We have the scaling limit
\begin{equation}
\frac{b-1}{\log b} \rho^{(b)}\bigg(\frac{x}{\log b} \bigg) \to \e^x ,
\end{equation}
which, however, is not a probability density on $\RR_{\geq 0}$. Working directly from \eqref{Ps}, we see that for $s$ fixed,
\begin{equation}
\frac{1}{\log b}\; P^{(b)}\bigg(\frac{s}{\log b}\bigg) \to
\begin{cases}
0 & \text{if }0<s <1 \\
s^{-2} &\text{if }s>1.
\end{cases}
\end{equation}

In the limit $b\to 1$, $E^{(b)}(k,L)$ converges to the Poisson distribution. That is,
\begin{equation}
E^{(b)}(k,L) \to \frac{L^k}{k!} \e^{-L}, \qquad \tilde P^{(b)}(s) \to \e^{-s} .
\end{equation}
To see this, note that for $L$ fixed and $b$ sufficiently close to $1$,
\begin{equation}
\log E^{(b)}(0,L) = \sum_{j=1}^\infty \log(1-(b-1)b^{-j} L) = -L +O(b-1) ,
\end{equation}
which proves the claim for $k=0$ and the gap distribution $\tilde P(s)$ (via Theorem 2.2 in \cite{nato}).
Again, for $L$ fixed and $b$ sufficiently close to $1$, we have for the remaining cases $k\geq 1$,
\begin{equation}
\begin{split}
E^{(b)}(k,L) & = E^{(b)}(0,L) \sum_{j_1<\ldots<j_k} \frac{E_1(1,(b-1)b^{-j_1} L)\cdots E_1(1,(b-1)b^{-j_k} L)}{E_1(0,(b-1)b^{-j_1} L)\cdots E_1(0,(b-1)b^{-j_k} L)} \\
&= (b-1)^k L^k E^{(b)}(0,L) \sum_{j_1<\ldots<j_k} \frac{b^{-(j_1+\ldots+j_k)}}{(1-(b-1)b^{-j_1} L)\cdots (1-(b-1)b^{-j_k} L)} \\
& = L^k E^{(b)}(0,L) \;\bigg(\frac{1}{k!}+O(b-1)\bigg).
\end{split}
\end{equation}

As to the limit of the unscaled sequence $\eta_n$,
we have for the density
\begin{equation}
\rho^{(b)}(x) \to 1, \qquad x\in[0,1] ,
\end{equation}
i.e., we have uniform distribution mod one. In view of \eqref{Pxs}, $P^{(b)}(x,s) \to \e^{-s}$ and thus, for the raw gap distribution, $P^{(b)}(s) \to \e^{-s}$.

The above discussion can be readily adapted to bases of the form $b=m^{1/r}$ considered in \S\ref{NEWSEC}. In particular, we again observe that both the raw and rescaled limiting gap distributions \eqref{Ps_r} and \eqref{thm3tildePdef_r} converge to the exponential distribution $\e^{-s}$ when $r\to\infty$ for fixed $m$, cf. Fig.~\ref{fig5}.

\vspace{10pt} {\em Acknowledgments.} We thank Jon Keating and Ze\'ev Rudnick for their comments on the first draft of this paper.

\end{document}